\definecolor{webgreen}{rgb}{0,.5,0}
\definecolor{webbrown}{rgb}{.6,0,0}
\DeclarePairedDelimiter\floor{\lfloor}{\rfloor}
\theoremstyle{plain}
\newtheorem{theorem}{Theorem}
\newtheorem{corollary}[theorem]{Corollary}
\theoremstyle{definition}
\newtheorem{example}{Example}
\theoremstyle{remark}
\newtheorem{remark}{Remark}
\begin{document}

\title{Card Tricks and Information}
\author{Aria Chen}
\author{Tyler Cummins}
\author{Rishi De Francesco}
\author{Jate Greene}
\author{Alexander Meng}
\author{Tanish Parida}
\author{Anirudh Pulugurtha}
\author{Anand Swaroop}
\author{Samuel Tsui}
\affil{PRIMES STEP}
\author{Tanya Khovanova}
\affil{MIT}
\date{}

\maketitle

\begin{abstract}
Fitch Cheney's 5-card trick was introduced in 1950. In 2013, Mulcahy invented a 4-card trick in which the cards are allowed to be displayed face down. We suggest our own invention: a 3-card trick in which the cards can be face down and also allowed to be placed both vertically and horizontally. We discuss the theory behind all the tricks and estimate the maximum deck size given the number of chosen cards. We also discuss the cases of hiding several cards and the deck that has duplicates.
\end{abstract}

\section{Introduction}

The five-card trick appeared in 1950 in \textit{Math Miracles}, \cite{Lee}, where it was attributed to Fitch Cheney. It was popularized in 2002 by Kleber \cite{K}. Throughout this paper, we assume that a standard deck of cards contains 52 cards. This is how the trick is seen by the audience.

\begin{tcolorbox}
\textbf{The 5-card trick.} The audience gives 5 cards from a standard deck to the magician's assistant. The assistant hides one card, then arranges the remaining four cards in a row, face up. The magician uses the description of how the cards are placed in a row to guess the hidden card.
\end{tcolorbox}

To avoid other means of signaling information, the magician can be in a different room, and an audience member can call the magician and describe how the cards are arranged. We describe the details of how this trick is performed in Section~\ref{sec:cm}. 

In 2002, Kleber \cite{K} found a different way to perform this trick, which allowed for a much larger deck. The trick could be performed with a deck of 124 cards, which is the largest possible. More generally, Kleber proved that if the audience chooses $K$ cards instead of 5, the largest possible deck size $N$ is $K!+K-1$. We reproduce the proof in Section~\ref{sec:fu}.

The follow-up four-card trick appeared in Colm Mulcahy's book \textit{Mathematical Card Magic: Fifty-Two New Effects} in 2013 \cite{Mulcahy}. The big idea is the same, but the trick uses fewer cards. This is possible due to extra flexibility: the assistant is allowed to put some of the cards face down.

\begin{tcolorbox}
\textbf{The 4-card trick.} The audience gives 4 cards from a standard deck to the magician's assistant. The assistant hides one card and then arranges the remaining three cards in a row, face up or down. The magician uses the description of how the cards are placed in a row to guess the hidden card.
\end{tcolorbox}

We describe the details of how this trick is performed in Section~\ref{sec:cm}.

We start the paper with the case when all the cards are face up in Section~\ref{sec:fu}. We start with definitions and consider two versions of the trick. In the first, simpler one, the audience chooses which card to hide. In the second version, the assistant chooses which card to hide. In this section, we study two scenarios: cards are in a line and in a circle. We also allow each card to be rotated in $R$ different ways. For each version and scenario, we calculate the maximum possible deck size $N$ given the number of cards $K$ the assistant gets and the number of rotations $R$.

In Section~\ref{sec:cm}, we discuss Cheney's method for the case when the assistant chooses the hidden card. We start with definitions and the original trick. Next, we provide a general formula for the deck size depending on the number of cards $K$ the assistant gets. Then, we show how this method can be applied to Mulcahy's trick when the cards are allowed to be face down. We calculate the largest deck size with which Mulcahy's trick can be performed as a function of $K$. Then, we extend the formula to allow each card to be rotated in $R$ ways.

Suppose we allow our cards to be placed vertically or horizontally, corresponding to $R=2$. Then, for $K=3$, the maximum deck is 54, which allows one to perform the trick with the standard deck of 52 cards. Moreover, it is possible to add two jokers to the deck. We describe this new 3-card trick for the standard deck in detail in Section~\ref{sec:trick} to allow it to be performed by magicians.

In Section~\ref{sec:rfaud}, we discuss the case when the cards can be rotated and flipped as before. But now, the audience chooses the hidden card. We calculate the upper bound on the deck size given $K$ and the number of allowed rotations $R$.

In Section~\ref{sec:duplicates}, our deck has each card duplicated. We assume that the cards are always face up. We calculate the maximum deck size when the audience chooses the hidden card. When the assistant chooses the hidden card, we estimate the upper bound for $K < 6$. We provide examples that match the bound for $K =2$ and 3. We also suggest a general strategy that works for any $K$ but doesn't reach the theoretical bound.

In Section~\ref{sec:morethanone}, the magician needs to guess more than one card. We provide details for two hidden cards, where we calculate the maximum deck size for the case when the audience chooses the hidden cards and estimate the bound for when the assistant chooses the hidden cards. We also suggest a strategy that is close to the bound. The strategy is generalizable for any number of hidden cards.

\section{Cards are face up}
\label{sec:fu}

\subsection{Definitions}
Let us assume the deck size is $N$, and the audience chooses $K$ cards to give to the assistant. Given the number of cards and conditions of the trick, we would like to estimate the largest possible deck of cards this trick can be performed on. When displaying the cards, the assistant is also allowed to rotate any of the cards in $R$ different ways. For example, when $R=2$, the cards could be vertical or horizontal.

This trick will look more impressive if the magician is in the room and the cards are asymmetrical. For example, many cheap decks have different widths of the border margin on the left and right sides. Choosing the correct margin is equivalent to choosing a rotation but hidden from the audience.

We consider two different scenarios:
\begin{itemize}
\item The audience chooses the hidden card for the magician to guess.
\item The assistant chooses the hidden card for the magician to guess.
\end{itemize}

We added the case when the audience chooses the hidden card, as it is simpler and provides insight into the other case.

We call each way the assistant can arrange the given cards \textit{a message}. We are also interested in the total number of messages using all possible cards. In most of the methods in this section, the magician and the assistant agree to correspond each arrangement of the cards to a number, which we call the \textit{signaling number} denoted as $S$. The total number of ways to arrange the cards, the \textit{maximum signaling number}, is denoted as $M$. In addition, we can exclude the hidden card from the set of face-up cards on the table. We call such cards \textit{excluded} and denote their number as $E$. We describe the Cheney method separately; in this method, the notion of the signaling number is adjusted.

\subsection{The audience chooses the hidden card}

The magician sees $E$ cards; there are $N-E$ possibilities for the hidden card. For every possible choice of the hidden card, the assistant must send a unique message to the magician. There are $M$ possible messages that could be sent. Thus, $N-E \le M$, implying
\[N\le M+E.\]

This bound is achievable. The magician and assistant agree on the ordering of the cards. They also agree on how an arrangement of the cards on the table can be translated to a number. For example, if the cards are in a line without rotations, then they can use the lexicographic ordering of permutations. The assistant looks at the cards that will be arranged on the table and excludes them from all the cards in a deck. Then, the order of the nonexcluded cards is adjusted to make them consecutive. The assistant needs to signal the index $S$ of the hidden card in the adjusted order. This can be done as there are $M$ ways to do so, and there are $M$ non-excluded cards.

The magician looks at the arrangement of the cards on the table and calculates the signaling number $S$. The magician excludes the cards on the table from the list of all possible hidden cards. There are now $M$ possible hidden cards, which the magician arranges in order in their mind, and announces the $S$-th card as the hidden card. Thus, the bound is achievable, and $N = M+E$.

\begin{tcolorbox}
\textbf{Formula for cards face up, the audience chooses the hidden card.} The maximum deck size is 
\begin{equation}
\label{eq:fu}
N = M +E.
\end{equation}
\end{tcolorbox}

\subsubsection{Cards are in a line}
\label{sec:rotatedAud}

The assistant is given $K$ cards. The audience hides one of the cards, and the assistant orders the other cards. In addition, in this trick, the assistant is also allowed to rotate any of the cards in $R$ different ways. The magician looks at these cards and guesses the hidden card. 

\begin{theorem}
When the cards are in a line and allowed to be rotated, and the audience chooses the hidden card, the maximum deck size is
\[N = R^{K-1}(K-1)! + K - 1.\]
\end{theorem}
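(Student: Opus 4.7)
The plan is to reduce this to a direct application of the general identity $N = M + E$ from Equation~(\ref{eq:fu}), which already supplies both the upper bound and a matching construction for the audience-chooses-hidden-card scenario. So the bulk of the work is just evaluating $M$ and $E$ for the specific setup of the theorem.

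First I would count the number of messages $M$. After the audience hides one card, the assistant has $K-1$ cards to place in a line; the number of orderings is $(K-1)!$. Each of these cards can independently be oriented in $R$ ways, contributing a factor of $R^{K-1}$, and these rotations are visible to the magician since the assumption is that the cards are asymmetrical under rotation. Hence $M = (K-1)!\, R^{K-1}$.

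Next I would identify $E$. The magician sees the $K-1$ arranged cards, and since the audience chose the hidden card from the original $K$, it cannot be any of the cards on the table. Therefore all $K-1$ displayed cards are excluded, giving $E = K-1$. Substituting into~(\ref{eq:fu}) yields $N = M + E = R^{K-1}(K-1)! + K - 1$, which is exactly the claimed formula.

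The main thing to verify carefully is that the achievability argument preceding~(\ref{eq:fu}) really does extend to the rotated case; there is no genuine obstacle. The assistant and magician agree on a total order of the deck and on a bijection between the $M$ arrangements (ordered tuples of $K-1$ distinct cards equipped with rotations) and the integers $1, \ldots, M$, for example by combining the lexicographic rank of the permutation with the base-$R$ digits of the rotations. After the audience hides a card, the remaining $N - (K-1) = M$ candidates can be ranked in the agreed order, and the assistant signals the rank of the true hidden card among them via the arrangement and rotations of the $K-1$ visible cards. The magician inverts the procedure. Thus the bound is tight, completing the proof.
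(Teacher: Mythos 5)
Your proof is correct and follows essentially the same route as the paper: both compute $M = R^{K-1}(K-1)!$ and $E = K-1$ and then invoke Eq.~(\ref{eq:fu}). Your additional paragraph spelling out how the achievability construction (ranking arrangements via permutation rank combined with base-$R$ rotation digits) extends to the rotated setting is a welcome elaboration of a step the paper leaves implicit, but it is not a different argument.
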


\begin{proof}
There are $K-1$ excluded cards. The cards can be permuted, and each can be rotated in $R$ ways. Thus, $E = K-1$ and $M = R^{K-1}(K-1)!$. By Eq.~\ref{eq:fu}, the maximum deck size is
\[N = M + E = R^{K-1}(K-1)! + K - 1.\]
\end{proof}

Table~\ref{tab:rot_aud} shows the maximum deck sizes for small values of $K$ and $R$.

\begin{table}[ht!]
\centering
\begin{tabular}{|c|c|c|c|c|c|c|} 
 \hline
 $R/K$ & 1	& 2	& 3	& 4	& 5	& 6 \\
 \hline
 1	& 1	& 2	& 4	& 9	& 28	& 125\\
 2	& 1	& 3	& 10	& 51	& 388	& 3845\\
 3	& 1	& 4	& 20	& 165	& 1948	& 29165\\
 4	& 1	& 5	& 34	& 387	& 6148	&122885\\
 5	& 1	& 6	& 52	& 753	& 15004 & 375005\\
\hline
\end{tabular}
\caption{Maximum deck sizes for small values of $K$ and $R$ when the cards are in a line, and the audience chooses the hidden card.}
\label{tab:rot_aud}
\end{table}

When $R=1$, the assistant can only permute the available cards corresponding to the first row of Table~\ref{tab:rot_aud}. The maximum size of the deck is 
\[N = M + E = (K-1)! + K-1.\]
The corresponding sequence is A005095 in the On-Line Encyclopedia of Integer Sequences (OEIS) \cite{OEIS}, and starting from $K=1$, we get
\[1,\ 2,\ 4,\ 9,\ 28,\ 125,\ 726,\ 5047,\ 40328,\ 362889,\ 3628810,\ \ldots.\]

\begin{example}
If $R=2$ and $K=4$, the maximum deck size is 51. That means we can perform this trick using the standard deck if we secretly remove one card.
\end{example}

\subsubsection{Cards are in a circle}
\label{sec:circleAud}

In this section, the assistant has to arrange the cards in a circle. When the assistant describes cards to the magician, the assistant starts with a random card in the circle and describes the circle clockwise. For example, we can assume that an audience member chooses the starting point. As everywhere in this section, we assume the cards are face up, and each card can be rotated in $R$ ways.

\begin{theorem}
When the cards are in a circle and are allowed to be rotated, and the audience chooses the hidden card, the maximum deck size is
\[N = R^{K-1}(K-2)!+K-1.\]
\end{theorem}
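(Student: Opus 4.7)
The plan is to invoke the general formula $N = M + E$ from Equation~\ref{eq:fu}, which was already shown to be achievable whenever the assistant can set up an injective correspondence between non-excluded cards and messages. So the task reduces to two separate counts: the number $E$ of excluded cards (those placed on the table and thus ruled out as candidates for the hidden card), and the number $M$ of distinguishable messages the assistant can produce when arranging $K-1$ cards in a circle with $R$ rotations per card.

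First I would handle $E$: exactly $K-1$ cards lie face up on the table, and since these cannot be the hidden card, they are excluded, giving $E = K-1$. Next I would count $M$. There are $K-1$ cards to arrange, each of which may be independently rotated in $R$ ways, contributing a factor $R^{K-1}$. For the combinatorial arrangement itself, I would note that since the audience picks the starting point of the clockwise reading, two linear readings that differ by a cyclic shift of the same circular arrangement must be considered the same message. The number of distinct circular arrangements of $K-1$ distinct cards is therefore $(K-1)!/(K-1) = (K-2)!$. Multiplying, I get $M = R^{K-1}(K-2)!$.

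The main subtle point — and the only place one could easily slip up — is justifying the division by $K-1$ in the circular count. I would emphasize that, because all $K-1$ cards on the table are distinct and the rotational symmetry group of the cycle has order $K-1$ and acts freely on labeled arrangements, no arrangement is stabilized by a nontrivial rotation, so the quotient is exact and equals $(K-2)!$; the $R$-fold per-card rotation is independent of this cyclic symmetry (it does not move cards along the circle, only rotates them in place), so the two factors multiply cleanly.

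Finally, I would combine the two counts via Equation~\ref{eq:fu}:
\[
N = M + E = R^{K-1}(K-2)! + (K-1),
\]
and remark that achievability follows directly from the general construction already described in the paragraph preceding Equation~\ref{eq:fu}, applied with this specific message set. No new construction is needed, so there is no real obstacle beyond the circular-counting observation.
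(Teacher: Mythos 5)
Your proposal is correct and follows essentially the same route as the paper: compute $E = K-1$, compute $M = R^{K-1}(K-2)!$ as circular permutations times per-card rotations, and apply Equation~\ref{eq:fu}. The only difference is that you spell out why the division by $K-1$ is exact (the cyclic group acts freely on arrangements of distinct cards), a detail the paper states without justification.
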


\begin{proof}
The maximum signaling number $M$ is $R^{K-1}(K-2)!$: the number of permutations up to rotations along the circle times the number of rotations of the cards. The number of excluded cards is $K-1$. Thus, by Eq.~\ref{eq:fu}
\[N = M + E = R^{K-1}(K-2)!+K-1.\]
\end{proof}

Table~\ref{tab:circleaud} shows the maximum deck sizes for small values of $K$ and $R$.
\begin{table}[ht!]
\centering
\begin{tabular}{|c|c|c|c|c|c|} 
 \hline
 $R/K$ & 2 & 3 & 4 & 5 & 6 \\
 \hline
 1	& 2	& 3	& 5 & 10 & 29\\
 2	& 3	& 6	& 19 & 100 & 773\\
 3	& 4	& 11 & 57 & 490	& 5837\\
 4	& 5	& 18 & 131 & 1540 & 24581\\
 5	& 6	& 27 & 253 & 3754 & 75005\\
\hline
\end{tabular}
\caption{The bound for deck sizes for small values of $K$ and $R$ when the cards are in a circle, and the audience chooses the hidden card.}
\label{tab:circleaud}
\end{table}

The sequence for $R=1$ starting from $K = 1$ is
\[1,\ 2,\ 3,\ 5,\ 10,\ 29,\ 126,\ 727,\ 5048,\ 40329,\ 362890,\ 3628811,\ 39916812,\ \ldots.\]
If we remove the first term, we get sequence A213169 from OEIS \cite{OEIS}.

\subsection{The bound for the assistant choosing the hidden card}

\subsubsection{General discussion}

Kleber \cite{K} had the following argument describing the maximum possible deck size. Guessing the hidden card by a magician is equivalent to guessing the set of all the cards the assistant receives. We keep in mind that it is an unordered set. The assistant cannot receive more sets of cards than the number of total messages the magician can see. If there are, one message would have to correspond to multiple sets of cards, making it impossible to guess. The total number of messages the assistant can send is $N \choose K-1$ times the number of ways to arrange the cards, which is $M$, the maximum signaling number. Turning this into an inequality, we get
\[{N\choose K} \leq M {N \choose K -1},\]
implying
\[N-K+1 \leq M K,\]
which is equivalent to 
\begin{equation}
\label{eq:bound}
N \leq KM + E.
\end{equation}

Kleber \cite{K} showed that this bound is achievable, and the strategy is called \textit{the best-trick strategy}. Let us say that the $K$ cards the assistant is given are $c_0 < c_1 < c_2 < c_3 < \cdots < c_{K-1}$. The assistant then hides $c_i$, where $i$ is the sum of the cards mod $K$.

If all the cards other than those visible to the magician are renumbered from 0 to $KM-1$, then the number corresponding to the hidden card is reduced by $i$, equaling $c_i - i$. The sum of the visible cards equals $i - c_i$ modulo $K$. Thus, the magician can find the value of the hidden card modulo $K$ using the sum of the values of the cards showing. This narrows the number of non-excluded cards by a factor of $K$. The assistant can signal $M$ numbers with different arrangements of the visible cards.

\begin{tcolorbox}
\textbf{Formula for cards face up, the assistant chooses the hidden card.} The maximum deck size is
\begin{equation}
\label{eq:besttrick}
N = KM + E.
\end{equation}
\end{tcolorbox}

\subsubsection{Cards are in a line}

The assistant is given $K$ cards. The assistant hides one of the cards and orders the other cards. In addition, in this trick, the assistant is also allowed to rotate any of the cards in $R$ different ways. The magician looks at these cards and guesses the hidden card. 

\begin{theorem}
When the cards are allowed to be rotated, and the assistant chooses the hidden card, the maximum deck size is
\[N = R^{K-1} K! +K-1.\]
\end{theorem}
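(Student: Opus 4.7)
The plan is to apply the general formula in Equation~\ref{eq:besttrick}, namely $N = KM + E$, by computing the parameters $M$ and $E$ for this particular setup and then confirming that the bound is attained by Kleber's best-trick strategy.

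First I would count the parameters. Since the assistant displays $K-1$ of the $K$ given cards on the table, the number of excluded cards is $E = K-1$. The $K-1$ displayed cards may be arranged in a line in $(K-1)!$ orders, and each of them may independently be placed in any of $R$ rotations, so the maximum signaling number is $M = R^{K-1}(K-1)!$. Substituting into Equation~\ref{eq:besttrick} and simplifying gives
\[N = KM + E = K \cdot R^{K-1}(K-1)! + (K-1) = R^{K-1} K! + K - 1,\]
which is the claimed value.

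For achievability I would invoke the best-trick strategy described just before Equation~\ref{eq:besttrick}: labeling the $K$ given cards $c_0 < c_1 < \cdots < c_{K-1}$, the assistant hides $c_i$ where $i \equiv c_0 + c_1 + \cdots + c_{K-1} \pmod K$, so that the magician can recover the residue class of the hidden card modulo $K$ from the sum of the visible values. After removing the $K-1$ visible cards and restricting to the correct residue class, exactly $M = R^{K-1}(K-1)!$ candidates remain, and the assistant has precisely $M$ distinct arrangements (permutations $\times$ rotations) with which to indicate the correct one. No step is genuinely hard: the whole proof is a substitution into the pre-established formula, and the only thing to verify is that rotations multiply cleanly with permutations to give the signaling count $M = R^{K-1}(K-1)!$, which is immediate because the rotation of each displayed card is an independent choice.
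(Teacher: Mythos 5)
Your proof is correct and follows essentially the same route as the paper: compute $E = K-1$ and $M = R^{K-1}(K-1)!$, then substitute into Eq.~\ref{eq:besttrick} to get $N = KM + E = R^{K-1}K! + K - 1$. Your extra paragraph spelling out achievability via Kleber's best-trick strategy is exactly the justification the paper already packaged into Eq.~\ref{eq:besttrick}, so it is a welcome elaboration rather than a departure.
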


\begin{proof}
The maximum signaling number $M$ is $R^{K-1}(K-1)!$: the number of permutations of $K-1$ cards, times $R^{K-1}$ to account for possible rotations of each card. The number of excluded cards is $K-1$. The result follows from Eq.~\ref{eq:besttrick}: $N = KM + E$.
\end{proof}

Table~\ref{tab:rot} shows the maximum deck sizes for small values of $K$ and $R$.

\begin{table}[ht!]
\centering
\begin{tabular}{|c|c|c|c|c|c|c|} 
 \hline
 $R/K$ & 1	& 2	& 3	& 4	& 5	& 6 \\
 \hline
 1	& 1	& 3	& 8	& 27	& 124	& 725 \\
 2	& 1	& 5	& 26	& 195	& 1924	& 23045 \\
 3	& 1	& 7	& 56	& 651	& 9724	& 174965 \\
 4	& 1	& 9	& 98	& 1539	& 30724 & 737285 \\
 5	& 1	& 11	& 152	& 3003	& 75004 & 2250005\\
\hline
\end{tabular}
\caption{Maximum deck sizes for small values of $K$ and $R$ when the cards are in a line and the assistant chooses the hidden card.}
\label{tab:rot}
\end{table}

\begin{example}
We can see that for $K = R = 3$, we can perform the trick with the standard deck of cards. For $K = 4$ and $R=1$, the bound is $27$, which is half a standard deck, including the joker. The trick can be performed using, for example, two red suits and a red joker.
\end{example}

For the original trick, when the cards are in a line and are not rotated, we get the formula $N = K! + K - 1$, which we already saw \cite{K}. The deck size as a function of $K$ is sequence A030495 in the OEIS \cite{OEIS} and starts as follows
\[1,\ 3,\ 8,\ 27,\ 124,\ 725,\ 5046,\ 40327,\ \ldots.\]

\subsubsection{Cards are in a circle}

Now, the cards are in a circle, and we are allowed to rotate each card in $R$ ways.

\begin{theorem}
When the cards are in a circle and allowed to be rotated, and the assistant chooses the hidden card, the maximum deck size is
\[N=R^{K-1} K (K-2)! +K-1.\]
\end{theorem}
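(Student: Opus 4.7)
The plan is to apply Equation~\ref{eq:besttrick}, the best-trick formula $N = KM + E$, so the whole job reduces to correctly counting $M$ and $E$ in the circular-with-rotations setting and then verifying that the bound is actually attained.

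First I would fix the counts. Since the assistant hides one of the $K$ given cards and lays out the remaining $K-1$, there are $E = K-1$ excluded cards. For the maximum signaling number $M$, I would argue that the assistant is placing $K-1$ distinct labeled cards around a circle with no distinguished starting point, so the number of distinct cyclic orders is $(K-1)!/(K-1) = (K-2)!$; independently, each of the $K-1$ placed cards can be shown in one of $R$ rotations, contributing a factor $R^{K-1}$. Hence $M = R^{K-1}(K-2)!$. Plugging into Equation~\ref{eq:besttrick} gives $N = KM + E = R^{K-1} K (K-2)! + K - 1$, as claimed.

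The second step is to justify achievability. Here I would invoke Kleber's best-trick strategy verbatim: label the deck $\{0,1,\dots,N-1\}$ and, given $c_0 < c_1 < \cdots < c_{K-1}$, hide $c_i$ where $i \equiv \sum_j c_j \pmod{K}$. The magician first computes the visible-card sum modulo $K$ to recover $i$, which narrows the unknown card to one of $M$ possibilities (after removing the visible cards and re-indexing so that positions congruent to $-i$ mod $K$ remain). The assistant then needs only to transmit a signaling number in $\{0,\dots,M-1\}$, and the $M = R^{K-1}(K-2)!$ cyclic arrangements with independent card rotations furnish exactly that many distinguishable messages.

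The only genuinely nontrivial point, and thus the main obstacle, is confirming that the $R^{K-1}(K-2)!$ count is correct and that these arrangements are genuinely distinguishable by the magician under the stated convention (the assistant describes the circle clockwise starting from an arbitrary card, so cyclic shifts of a layout collapse to one message, while different rotations of individual cards do not). Once this is checked, the rest of the argument is a direct substitution into Equation~\ref{eq:besttrick}, matching the style of the earlier in-a-line theorem.
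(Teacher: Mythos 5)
Your proposal is correct and follows essentially the same route as the paper: compute $E = K-1$ and $M = R^{K-1}(K-2)!$ (cyclic orders times independent card rotations) and substitute into Eq.~\ref{eq:besttrick}. Your explicit recap of Kleber's best-trick strategy for achievability is just an unpacking of what the paper already established when deriving $N = KM + E$ in the general discussion, so the two arguments coincide in substance.
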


\begin{proof}
The maximum signaling number $M$ is $R^{K-1}(K-2)!$: the number of permutations of $K-1$ cards divided by $K-1$, to account for the fact that it is not known where the circle starts, times $R^{K-1}$ to account for possible rotations of each card. The number of excluded cards is $K-1$. The result follows from Eq.~\ref{eq:besttrick}: $N = KM + E$.
\end{proof}

Table~\ref{tab:rotcircleass} shows the maximum deck sizes for small values of $K$ and $R$.

\begin{table}[ht!]
\centering
\begin{tabular}{|c|c|c|c|c|c|} 
 \hline
 $R/K$ & 2 & 3 & 4 & 5 & 6 \\
 \hline
 1	& 3	& 5	& 11 & 34 & 149\\
 2	& 5	& 14 & 67 & 484 & 4613\\
 3	& 7	& 29 & 219 & 2434 & 34997\\
 4	& 9	& 50 & 515 & 7684 & 147461\\
 5	& 11 & 77 & 1003 & 18754 & 450005\\
\hline
\end{tabular}
\caption{Maximum deck size for small values of $K$ and $R$ when the cards are in a circle and the assistant chooses the hidden card.}
\label{tab:rotcircleass}
\end{table}

When $R=1$, we get
\[N = K(K-2)! + K - 1.\]
The new sequence A372255 starts from index 2 as:
\[3,\ 5,\ 11,\ 34,\ 149,\ 846,\ 5767,\ 45368,\ 403209,\ 3991690,\ 43545611,\ \ldots.\]

\section{The assistant chooses the hidden card. Cheney's method}
\label{sec:cm}

\subsection{Definitions}

We describe Cheney's method in a general setting. This method assumes that the assistant chooses the hidden card. In Cheney's method, the cards are divided into $K-1$ groups: the largest number of groups that guarantee that two cards among $K$ cards are in the same group. The assistant hides one of these cards and places the other card, which we call the \textit{signaling card}, in a pre-agreed fashion. For example, the signaling card could be the leftmost card. Some advanced magicians vary the placement of the signaling card to make it more difficult for the audience to figure out the trick.

When the magician sees the signaling card, the magician knows which group the hidden card belongs to. 

Similar to before, we call the number of ways to arrange the other cards the \textit{maximum signaling number} and denote it as $M$. The assistant has the flexibility to hide either card from the group. Suppose the cards in a group are numbered 0 through $X$, and the assistant has two cards, $A$ and $B$. Suppose $B-A$ is smaller than $A-B$ modulo $X+1$. Then, the assistant hides $B$ and signals $B-A$ modulo $X+1$. Thus, the maximum size of the group is $2M +1$. The three cards that are not a signaling card can be permuted in 6 different ways to signal a number from 1 to 6. Thus, the number of cards in a group can be up to 13. Amazingly, this is the same as the number of cards in a suit in a standard deck.

\begin{tcolorbox}
\textbf{Formula for cards face up, the audience chooses the hidden card, and Cheney's method is used.} The maximum deck size is 
\begin{equation}
\label{eq:cm}
N = (K-1)(2M+1).
\end{equation}
\end{tcolorbox}

For the original Cheney trick with $K$ cards, the total deck size is
\[N = (K-1)(2M+1) = (K-1)(2(K-2)! +1) = 2(K-1)! + K-1.\]

Starting from index 2, we get the sequence of maximum deck sizes using this method, which is now sequence A370888:
\[3,\ 6,\ 15,\ 52,\ 245,\ 1446,\ 10087,\ 80648,\ 725769,\ 7257610,\ \ldots.\]

We see that for $K=5$, we get the standard deck size. 

Now, we give a more detailed explanation for readers who want to perform the original 5-card trick.

\textbf{Five-card trick explanation.} In the original 5-card trick, the groups are suits. Given that there are five cards total, by the pigeonhole principle, there are always two cards of the same suit. Thus, the assistant can pick one of them to hide. The other card becomes the signaling card and can be placed as the leftmost card. The remaining three cards are placed in a specific order to signal the number $S$ from 1 through 6. For this, we can assume there is an ordering on the whole deck. We can assume that the cards start with the ace of clubs, two of clubs, and so on, then move to hearts, diamonds, and spades. So, the last card is the king of spades. We assume the following lexicographic ordering of permutations: 123, 132, 213, 231, 312, and 321. The magician chooses the hidden card so that the value of the leftmost card plus $S$ equals the value of the hidden card, using a wrap-around if needed. As $S$ signals a number from 1 to 6 inclusive, we can see that with 13 cards in a suit and given two cards $A$ and $B$ of the same suit, either $A$ can be reached from $B$ while counting clockwise in 6 steps or vice versa.

\subsection{Cards can be flipped}

In this variation, we are allowed to have some cards face down, as in Mulcahy's 4-card trick \cite{Mulcahy}. Mulcahy's method here is a generalization of Cheney's method. In this method, we divide the strategy into two cases: when all the cards are face down and otherwise. By agreement, all the cards face down means a specific card. Otherwise, we use Cheney's method. We divide all the other cards in the deck into $K-1$ groups. This way, the $K$ chosen cards have at least two cards in the same group. We pick two cards from the same group and hide the one that needs a smaller signaling number $S$. The maximum signaling number $M$ is bounded by the number of ways to flip and permute the cards.

\begin{theorem}
When the cards are allowed to be face down and not allowed to be rotated, and Mulcahy's method is used, the maximum deck size is
\[N= 1 + (K-1)(2M+1) = 1+ (K-1)\left( 1+ 2\sum_{i=1}^{K-1}(i-1)!{K-1 \choose i} \right).\]
\end{theorem}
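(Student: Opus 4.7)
The plan is to follow the two-case strategy laid out immediately before the theorem, count the signaling space in the flipping scenario, and then plug into Eq.~\ref{eq:cm}. First, I would reserve one distinguished card $c^\star$ whose encoding is the all-face-down configuration of the $K-1$ visible cards. If $c^\star$ appears in the assistant's hand, the assistant hides it and turns every visible card face down; otherwise the all-face-down arrangement is never used. This accounts for the leading ``$1$'' in the formula.

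Second, I would apply Cheney's method to the remaining $N-1$ cards via Eq.~\ref{eq:cm}. The magician and assistant partition these $N-1$ cards into $K-1$ groups, so pigeonhole on the (at most) $K$ non-$c^\star$ cards in the hand forces two cards into a common group. The assistant hides one and uses the other as the signaling card, placed face up so that its identity reveals the group. The remaining $K-2$ visible cards, together with the face-up/face-down pattern across all $K-1$ positions, encode a signaling number $S\in\{1,\ldots,M\}$. Equation~\ref{eq:cm} then yields $N-1\le(K-1)(2M+1)$, with equality once the signaling space of size $M$ is demonstrably achievable.

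Third, I would count $M$ by conditioning on $i$, the number of face-up cards in $\{1,\ldots,K-1\}$. Adopting the convention that the signaling card sits in the leftmost face-up position, a choice of which $i$ positions are face up --- $\binom{K-1}{i}$ of them --- already determines the signaling card's slot. The remaining $i-1$ face-up positions hold non-signaling cards whose $(i-1)!$ relative orderings provide the rest of the signal, while the $K-1-i$ face-down cards are mutually indistinguishable in identity and contribute nothing more. Summing over $i$ gives
\[
M=\sum_{i=1}^{K-1}(i-1)!\binom{K-1}{i},
\]
and substituting this into $N=1+(K-1)(2M+1)$ produces the stated formula.

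The main obstacle I expect is the achievability direction for $M$: one must describe a canonical protocol that, given any target signal $S$ and any hand of $K$ cards, specifies precisely which non-signaling cards are oriented face up and how the face-up cards are permuted, so that every abstract signal counted in the sum can actually be realized. Verifying that the two-case scheme does not double-count (the all-face-down pattern being reserved strictly for $c^\star$) and that the pigeonhole argument still applies once $c^\star$ is excluded are smaller bookkeeping points that should fall out of the same protocol.
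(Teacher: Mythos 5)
Your proposal is correct and follows essentially the same route as the paper: reserve the all-face-down pattern for one special card (the leading $1$), apply Cheney's method with $K-1$ groups and the leftmost face-up card as the signaling card, and count $M=\sum_{i=1}^{K-1}(i-1)!\binom{K-1}{i}$ by choosing the $i$ face-up positions and ordering the $i-1$ non-signaling face-up cards. The paper's proof is exactly this argument, stated slightly more tersely and without dwelling on the achievability protocol you flag as a remaining obstacle.
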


\begin{proof}
We have one case when all the cards are face down.

Suppose not all the cards are face down. Mulcahy's method is similar to Cheney's method. Here, the total number of groups is $K-1$, and the first face-up card (the signaling card) designates the group. We calculate the maximum signaling number in the following way. If we have $i$ face-up cards, we have ${K-1\choose i}$ ways to choose their places. After that, we have to place the signaling card at a particular spot and we have $(i-1)!$ ways to order the remaining face-up cards. Thus, the maximum signaling number is 
\[M = \sum_{i=1}^{K-1}(i-1)!{K-1 \choose i},\]
and the deck size is $(K-1)(2M+1)$ due to Eq.~\ref{eq:cm}. Combining both cases, we arrive at this theorem.
\end{proof}

Starting from index 1, we get a new sequence A371217 in the OEIS \cite{OEIS}
\[1,\ 4,\ 15,\ 52,\ 197,\ 896,\ 4987,\ 33216,\ 257161,\ 2262124,\ 22241671,\ \ldots. \]

For readers who want to perform the original Mulcahy's 4-card trick \cite{Mulcahy}, we give a more detailed explanation here from the assistant's point of view.

\textbf{Four-card trick explanation.} It is easier to explain this trick using numbers. Suppose we have cards with numbers from 0 to 51 inclusive. Consider 51 a special case. If 51 is among our four cards, we hide it and represent it with all the cards face down. Suppose we do not have 51 among the four cards. Then, two cards out of four with the same quotient modulo 17 exist. We denote them as $A$ and $B$. This idea is similar to the suit idea in Cheney's method. We have the flexibility to reorder $A$ and $B$, so we choose $A$ so that $S = B-A < 9 \pmod {17}$. The value $S$ becomes our signaling number. Now, we hide $B$ and place $A$ face up as the leftmost face-up card, using $A$ as a signaling card. We use the other cards to represent a signaling number between 1 and 8 inclusive. We consider face-down cards as zeros and face-up cards as ones and read them as binary. As not all the cards are face down, we can get a number 1 through 7. In addition, when all the cards are face up, we have extra flexibility; we can swap two cards that are not the signaling card. Thus, we can agree that when these two cards are in order, the signaling number is 7; otherwise, it is 8.

\subsection{Cards can be flipped and rotated}

Now, we look at a variation where each card can be rotated in $R$ ways. We also allow the cards to be flipped as before. We calculate the maximum deck size if we use Mulcahy's strategy.

\begin{theorem}
When the cards are allowed to be rotated and flipped and Mulcahy's method is used, the maximum deck size is
\[N = R^{K-1}+(K-1)\left(2R^{K-1} \sum_{i=1}^{K-1} \binom{K-1}{i}(i-1)! + 1\right).\]
\end{theorem}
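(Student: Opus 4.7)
The plan is to follow the same two-case dichotomy used in the preceding theorem, now upgraded by the factor $R$ of rotational freedom at each of the $K-1$ visible positions. First, I would handle the reserved configuration in which every displayed card is face down. Even face-down cards can be independently rotated in $R$ ways (as in the paper's guiding example of vertical versus horizontal placement), so there are $R^{K-1}$ distinct all-face-down arrangements. By prior agreement, these encode $R^{K-1}$ specific cards from the deck, contributing the leading $R^{K-1}$ term.

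Next, for any arrangement with at least one face-up card, I would apply Cheney's method exactly as in the previous theorem: partition the remaining cards into $K-1$ groups so that any $K$ chosen cards must contain two from the same group. The first face-up card serves as the signaling card, identifying the group, while the remaining displayed structure encodes the signaling number. To count the new maximum signaling number $M$, I would re-run the enumeration from the previous theorem and then multiply by the rotations. Summing over the number $i$ of face-up cards gives $\sum_{i=1}^{K-1}\binom{K-1}{i}(i-1)!$ for the choice of which positions are face up, the placement of the signaling card, and the ordering of the remaining face-up cards; independently, every one of the $K-1$ positions admits $R$ rotations, contributing a uniform factor of $R^{K-1}$. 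Hence
\[
M = R^{K-1}\sum_{i=1}^{K-1}\binom{K-1}{i}(i-1)!.
\]
By Eq.~\ref{eq:cm}, each of the $K-1$ groups can then support up to $2M+1$ cards.

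Combining the two disjoint cases yields
\[
N = R^{K-1} + (K-1)(2M+1),
\]
which is the claimed formula after substituting $M$. The main subtlety, and where I would concentrate the argument's care, is justifying that the two cases are genuinely disjoint and that rotations of face-down cards really are observable to the magician; the paper's framing of rotations as physical orientations (vertical or horizontal) handles the latter automatically, while the former is immediate because the Cheney branch requires at least one face-up card by construction. Everything else is a mechanical bookkeeping upgrade of the flip-only theorem in which the factor $R^{K-1}$ is inserted once for the reserved case and once inside each group's signaling count.
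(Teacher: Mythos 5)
Your proposal is correct and follows essentially the same argument as the paper: the reserved all-face-down case contributes $R^{K-1}$ messages, and for the Cheney branch the maximum signaling number is computed by summing $\binom{K-1}{i}(i-1)!$ over the number $i$ of face-up cards and multiplying by the uniform rotation factor $R^{K-1}$, after which Eq.~\ref{eq:cm} gives $(K-1)(2M+1)$. Your added remarks on case disjointness and the observability of rotations on face-down cards are sound but not points the paper dwells on.
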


\begin{proof}
If all the cards are face down, there are $R$ ways to rotate each card. This results in $R^{K-1}$ different outcomes. 

Now we assume that there are face-up cards and the leftmost face-up card is the signaling card. To calculate the maximum signaling number $M$, we do casework based on how many face-up cards there are. If there are $i$ face-up cards, then the number of different messages is a product of $R^{K-1}$ rotations, $\binom{K-1}{i}$ ways to choose where to place the face-up cards, and $(i-1)!$ ways to permute the face-up cards. Thus, summing over $i$, we get
\[M = \sum_{i=1}^{K-1} R^{K-1}\binom{K-1}{i}(i-1)!.\]
 By Eq.~\ref{eq:cm}, when not all the cards are face down, the largest deck is $(K-1)(2M+1)$. Adding $R^{K-1}$, we get the final formula.
\end{proof}

Table~\ref{tab:rotfl} shows the maximum deck sizes for small values of $K$ and $R$. For $R=1$, we get the same value as before, when we allowed the cards to be flipped but didn't allow rotations.

\begin{table}[ht!]
\begin{center}
\begin{tabular}{|c|c|c|c|c|c|c|}
\hline
$R/K$ &1&2&3&4&5&6 \\
\hline
1&1&4&15&52&197&896 \\
2&1&7&54&395&3092&28517 \\
3&1&10&119&1326&15637&216518 \\
4&1&13&210&3139&49412&912389 \\
5&1&16&327&6128&120629&2784380 \\
\hline
\end{tabular}
\end{center}
\caption{Maximum deck sizes for small values of $K$ and $R$ when the cards are in a line and can be flipped. The assistant chooses the hidden card.}
\label{tab:rotfl}
\end{table}

\begin{example}
Consider $K=3$ and $R=2$. The largest possible deck is 54, slightly bigger than the standard deck. Thus, the trick works for the standard deck. As we mentioned before, replacing rotations with an asymmetric deck is an option that will make impressive magic. We describe this new trick in the next section. 
\end{example}

\section{The new 3-card trick}
\label{sec:trick}

\begin{tcolorbox}
\textbf{The 3-card trick.} The audience gives the magician's assistant 3 cards from a standard deck. The assistant hides one card and then arranges the two other cards in a row, face up or down. In addition, each card can be placed horizontally or vertically. The magician uses the description of how the cards are placed in a line to guess the hidden card.
\end{tcolorbox}

\subsection{Preparation}

When talking about card rotations, an unrotated card represents the number 1 (because it looks like a 1), and a rotated card represents 0, as shown in Table~\ref{tab:bin}. The \textit{rotating number}, denoted $R$, is the number obtained by reading the card rotations as binary, from left to right (0 through 3). For example, if both cards are unrotated, then $R = 11_2 = 3$.

When talking about flipped and unflipped cards, a face-up card represents 0, and a face-down card represents 1, as shown in Table~\ref{tab:bin}. (We made this choice so that the count starts with zero.) The \textit{flipping number}, denoted as $F$, is the number obtained by reading the card flips as binary, from left to right. There are four possible flipping numbers, 0 through 3, but the flipping number $11_2 = 3$ (both cards face down) is reserved for a special case, which will be described below. This means there are effectively 3 flipping numbers, 0 through 2.

\begin{table}[ht!]
\begin{center}
\begin{tabular}{ |c|c|c| } 
 \hline
 & Flipping Number & Rotation Number \\
 \hline
 0 & Unflipped & Rotated \\ 
 \hline
 1 & Flipped & Unrotated \\ 
 \hline
\end{tabular}
\end{center}
\caption{From binary to orientation conversion.}
\label{tab:bin}
\end{table}

\subsection{Assistant's task}

We divide the explanation into two cases: when the assistant gets an ace and otherwise.

\textbf{Ace case:} When the assistant's hand includes an ace, the ace should be selected as the hidden card. Then, the assistant should place the other two cards face down to signal that the hidden card is an ace. The rotating number 0 represents hearts, 1 represents diamonds, 2 represents clubs, and 3 represents spades.

\textbf{No-ace case:} Suppose the assistant's hand doesn't include an ace, then, by the pigeonhole principle, there are at least two cards of the same color. The cards within a suit should be numbered from 2 to 13, as we do not need 1 because there is no ace. Let the numbers on the cards be $x$ and $y$, where $x \geq y$. At this point, there are two cases for what the assistant can do, depending on whether the two cards of the same color are in the same suit.

\begin{enumerate}
\item The same-color cards have the same suit. The assistant will hide the larger number $x$, and put $y$ in the leftmost face-up position as the signaling card. Then, the signaling number will be ${x-y}$, which is between 1 and 11.
\item The same-color cards are from different suits. This time, the assistant selects the smaller number $y$, and displays $x$ as the signaling card in the leftmost face-up position. Then, the signaling number will be $12-(x-y)$, which is between 1 and 12.
\end{enumerate}

Once the assistant knows the signaling number $S$, the assistant must solve the equation $4F+R+1=S$, where $S$ is the signaling number and $F$ and $R$ are flipping and rotating numbers defined above. A convenient way for the assistant to calculate this is by calculating $R = S-1 \pmod 4$ and then finding $F$. Recall that the assistant will have at least one face-up (unflipped) card, implying that $F$ is in the range $[0\text{--}2]$. The rotating number is in the range $[0\text{--}3]$. Thus, the signaling number is in the range $[1\text{--}12]$. Then, the assistant arranges the cards according to the values of $F$ and $R$ and positions the cards with the help of Table~\ref{tab:bin}. The signaling card, the card of the same color, must be the leftmost face-up card.

\subsection{Magician's task}

When both cards are face down, the hidden card is an ace. The conversions in Table~\ref{tab:bin} provide a value of $R$. An $R$ value of 0 represents hearts, 1 represents diamonds, 2 represents clubs, and 3 represents spades.

Suppose at least one of the cards is face up. The magician calculates the values of $R$ and $F$ using Table~\ref{tab:bin}. Then, the magician calculates the signaling number, $S=4F+R+1$. The color of the missing card is the same as that of the signaling card: the leftmost face-up card. Let the value of the signaling card be $x$. Then, there are two possibilities for how the magician must calculate the value of the hidden card.

\begin{enumerate}
\item $x+S\leq 13$. Then, the suit of the hidden card is the same as the suit of the signaling card, and the value of the hidden card is $x+S$.
\item $x+S\geq 14$. Then, the suit of the hidden card differs from the suit of the signaling card, but the color must still be the same. The value of the hidden card is $(x+S)-12$.
\end{enumerate}

\subsection{Example}

Say the assistant receives a 7 of diamonds, a queen of hearts, and a 3 of spades. Since both the hearts and diamonds are red, the assistant should convert them into their numbers, which are 12 and 7, respectively. Since the cards are of different suits, the assistant hides the smaller number, the 7 of diamonds. Then 12, or the queen of hearts, becomes the signaling card. The signaling number $S$ is $12 - (12-7) =7$, so the flipping number $F$ must be 1, and the rotating number $R$ must be 2. This means that out of the two cards the magician sees, the leftmost one must be unrotated and unflipped, while the rightmost one is rotated and flipped. The signaling card must be face up, so the magician will see an unrotated and face-up queen of hearts on the left and a rotated and flipped card on the right (which is the 3 of spades). The hidden card is the 7 of diamonds.

Now, we move on to the protocol for the magician. The magician calculates the flipping and rotating numbers, finding them to be 1 and 2, respectively, implying that the signaling number is $4\cdot 1+2+1=7$. The first card the magician sees, the queen of hearts, has a value of 12. Since $12+7=19\geq 14$, the value of the hidden card is $19-12$, or $7$. The suit must be different; thus, the hidden card is the 7 of diamonds.

\section{Cards can be rotated and flipped. The audience chooses the hidden card.}
\label{sec:rfaud}

We studied the case when the assistant chooses the hidden card above, due to the fact that it corresponds to a known 4-card trick \cite{Mulcahy}. Now, we turn our attention to the case when the audience chooses the hidden card.

\begin{theorem}
When the cards are allowed to be rotated and flipped, and the audience chooses the hidden card, the maximum deck size is bounded as:
\[N \leq R^{K-1}(K-1)!\sum\limits_{i=0}^{K-1}\frac{1}{i!}{K-1\choose i}.\]
\end{theorem}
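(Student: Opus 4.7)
The plan is to derive the bound by a fixed-pool counting argument that extends the identity $N \le M + E$ from the face-up case to the setting where the number $i$ of face-up cards varies from view to view. First I would interpret the right-hand side as the size of a natural pool of magician views. For any fixed subset $B \subseteq [N]$ of size $K-1$, let $\mathcal{P}_B$ denote the set of magician views whose face-up identities all lie in $B$. Splitting the count by $i$ (the number of face-up positions), one gets $\binom{K-1}{i}$ choices of positions, $(K-1)!/(K-1-i)!$ ordered identity choices from the $K-1$ elements of $B$, and $R^{K-1}$ rotations; substituting $j=K-1-i$ and using $\binom{K-1}{K-1-j}=\binom{K-1}{j}$ rewrites the sum as
\[|\mathcal{P}_B| \;=\; R^{K-1}(K-1)!\sum_{j=0}^{K-1}\frac{1}{j!}\binom{K-1}{j} \;=\; V.\]

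Next I would fix $B$ and, for each card $c \in [N]\setminus B$, consider the hand $T_c := B\cup\{c\}$ with $c$ hidden. Because $T_c\setminus\{c\}=B$, the only face-up identities that can ever appear in the assistant's view $v(T_c,c)$ come from $B$, so $v(T_c,c) \in \mathcal{P}_B$ automatically. Distinct hidden cards must be decoded by the magician to distinct values, hence correspond to distinct views, so the map $c\mapsto v(T_c,c)$ injects $[N]\setminus B$ into $\mathcal{P}_B$. This already yields the preliminary bound $N-(K-1)\le V$ with essentially no work.

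The main obstacle will be closing the $K-1$ gap between this preliminary bound $N\le V+K-1$ and the stated bound $N\le V$. For each $b\in B$, taking a hand $T_b=B\cup\{x_b\}$ with $x_b\in[N]\setminus B$ and $b$ hidden leaves the assistant the freedom to place $x_b$ face-up, so $v(T_b,b)$ need not lie in $\mathcal{P}_B$. To extract the tight bound I would argue that, without loss of generality, the strategy can be arranged so that each of the $K-1$ views $v(T_b,b)$ is also steered into $\mathcal{P}_B$: since the assistant always has the option of placing $x_b$ face-down and choosing face-up identities only from $B\setminus\{b\}$, at least one candidate view in $\mathcal{P}_B$ decoding to $b$ is available for every $b$, and a Hall/SDR-type matching argument on the slack $V-(N-K+1)$ unused views of $\mathcal{P}_B$ should let us pick $K-1$ distinct such views simultaneously, extending the injection to all of $[N]$. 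I expect this matching step—ensuring that no two elements of $B$ are forced into the same unused view of $\mathcal{P}_B$—to be the main technical hurdle, and I would first verify the construction in the cases $K=2$ and $K=3$ before formalizing the general argument.
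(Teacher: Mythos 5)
Your pool count and preliminary bound are correct, and they already cover everything the paper's own proof actually does: the paper's argument is precisely the computation of $|\mathcal{P}_B|$ (split by the number of face-down cards, giving $\binom{K-1}{i}^2(K-1-i)!$ per term, the same sum as yours), asserted as a bound on $N$. Your injection $c\mapsto v(T_c,c)$ for $c\notin B$ is a more careful rigorization of that count than the paper provides, but note it only yields $N\le V+K-1$; the paper silently drops the ``$+E$'' correction that its own Eq.~(1) had in the face-up setting, and acknowledges the discrepancy only in an informal remark later in the same section (the strategy for $K=3$, $R=1$ ``does not use the message $XX$''). So you have correctly isolated the real difficulty; the problem is that your proposed way of closing the $K-1$ gap does not work.

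The flaw is in the claim that ``at least one candidate view in $\mathcal{P}_B$ decoding to $b$ is available for every $b\in B$.'' This conflates the assistant's physical freedom with the magician's fixed decoding rule: the assistant can indeed lay $x_b$ face down, but the resulting view is decoded to whatever the magician's function $g$ says, which need not be $b$; nothing in the validity of a strategy forces $g$ to assign $b$ to \emph{any} view whose face-up identities lie in $B\setminus\{b\}$. A concrete counterexample: for $K=2$, $R=1$, $N=2$, the strategy ``always show the other card face up; decode face-up $x$ to the other card; never use the face-down view'' is valid, yet for $B=\{1\}$ no view in $\mathcal{P}_B$ decodes to card $1$, so the injection cannot be extended to all of $[N]$ even though $N\le V=2$ holds. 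Nor can you ``without loss of generality'' re-route the assistant: steering $v(T_b,b)$ into $\mathcal{P}_B$ imposes a requirement on the decoder, and these requirements conflict across different sets $B$ — in the same $K=2$ example the unique face-down view would have to decode to $1$ (for $B=\{1\}$) and to $2$ (for $B=\{2\}$) simultaneously. A per-$B$ Hall/SDR matching inside $\mathcal{P}_B$ cannot see, let alone resolve, these cross-$B$ conflicts. Any correct proof of the exact bound $N\le V$ has to exploit the global structure of the decoding function — for instance, choosing $B$ to contain the cards to which the face-down-containing views decode and deriving a contradiction when $N>V$ (this is how one verifies the cases $K=2$ and $K=3$, $R=1$) — and that argument is genuinely different from, and harder than, both your matching plan and the paper's one-line count.
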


\begin{proof}
The number of ways to rotate the cards is $R^{K-1}$ because each can be rotated in $R$ ways. Let $i$ be the number of face-down cards. The number of ways to choose $i$ face-down cards is $K-1\choose i$. The number of ways to choose where the $i$ face-down cards are placed among all the cards is $K-1\choose i$. The number of ways to order the remaining face-up cards is $(K-i-1)!$. The maximum number that can be signaled with $i$ face-down cards is ${K-1\choose i}^2(K-i-1)!$. Summing this up, we get
\[R^{K-1}\sum\limits_{i=0}^{K-1}{K-1\choose i}^2(K-i-1)! = R^{K-1}(K-1)!\sum\limits_{i=0}^{K-1}\frac{1}{i!}{K-1\choose i}.\]
\end{proof}

Table~\ref{tab:rfaud} shows the bounds for the maximum deck sizes for small values of $K$ and $R$.
\begin{table}[ht!]
\begin{center}
\begin{tabular}{|c|c|c|c|c|c|c|}
\hline
$R/K$ & 1 & 2 & 3 & 4 & 5 & 6 \\
\hline
1 & 1 & 2 & 7 & 34 & 209 & 1546 \\
2 & 1 & 4 & 28 & 272 & 3344 & 49472 \\
3 & 1 & 6 & 63 & 918 & 16929 & 375678 \\
4 & 1 & 8 & 112 & 2176 & 53504 & 1583104 \\
5 & 1 & 10 & 175 & 4250 & 130625 & 4831250 \\
\hline
\end{tabular}
\end{center}
\caption{Bounds for the maximum deck sizes for small values of $K$ and $R$ when the cards can be flipped. The audience chooses the hidden card.}
\label{tab:rfaud}
\end{table}

\begin{example}
For $K=2$, we have $N \leq R({1 \choose 0} + {1 \choose 1}) = 2R$. This bound is achievable. The only card on the table can be either face down or face up and can be rotated in $R$ ways. Thus, the number of different messages, the maximum signaling number, is $2R$.
\end{example}

For $R=1$, we get sequence A002720 in the OEIS, and starting from $K=1$, it is:
$$1,\ 2,\ 7,\ 34,\ 209,\ 1546,\ 13327,\ 130922,\ 1441729,\ 17572114,\ \ldots.$$ 

\begin{example}
For $R=1$ and $K=3$, the bound gives us $N=7$. We describe the strategy, noting that $X$ means a face-down card where all the numbers are considered modulo 7.

Table~\ref{tab:faud3} shows what the magician sees and the corresponding hidden card.

\begin{table}[ht!]
\begin{center}
\begin{tabular}{|c|c|}
\hline
\text{Message} & \text{Hidden Card} \\ \hline
$X$, $a$ & $a+1$ \\
$a$, $X$ & $a+3$ \\
$a$, $a \pm 1$ & $a+5$ \\
$a$, $a \pm 2$ & $a+4$ \\
$a$, $a \pm 3$ & $a+2$\\
\hline
\end{tabular}
\end{center}
\caption{The strategy for $K = 3$. The audience chooses the hidden card.}
\label{tab:faud3}
\end{table}

Here is the assistant's algorithm. Suppose $b$ is the card chosen by the audience. The leftover cards are $x$ and $y$. If $x$ and $y$ contain $b-1$, the assistant shows the message $X,b-1$. If they contain $b-3$, the message is $b-3,X$. Otherwise, the available cards are two cards from the set of four cards $\{b-2,b+1,b+2,b+3\}$. There are six possibilities for two cards. Here is the list in which order the cards should be displayed for each pair:
\[(b-2,b+1), \quad (b-2,b+2), \quad (b+3, b-2), \quad (b+2,b+1), \quad (b+3,b+1), \quad (b+2,b+3).\]
\end{example}

\begin{remark}
This method always has at least one excluded card. It might seem that the bound should be increased by 1. However, we do not use the message $XX$, which decreases the maximum signaling number.
\end{remark}

In the previous scenarios, when the audience chose the hidden card, we reached the information-theoretic bound. Here, for small $K$, we can reach the bound too. However, whether our luck will continue for larger $K$ is unclear. This is because our strategy is not efficient. For example, if the assistant gets the cards $a$ and $a+2$ to display with $a+3$ hidden, there are multiple ways to proceed. The assistant can display $(X,a+2)$ or $(a,X)$. This does not necessarily imply that the bound is not tight.

\section{Duplicates}
\label{sec:duplicates}

In this next variation, we look at a deck that consists of cards 1 to $\frac{N}{2}$ inclusive, but each number appears twice. We denote the number of distinct cards by $D$. That is, $D = \frac{N}{2}$. We assume that the cards are always face up.

\subsection{The audience chooses the hidden card}

\begin{theorem}
When the deck consists of duplicate cards, and the audience chooses the hidden card, the maximum deck size is
\[N = 2 \frac{(K-1)!}{2^{\floor*{ \frac{K-1}{2}}}}+ 2 \floor*{ \frac{K-1}{2}}.
\]
\end{theorem}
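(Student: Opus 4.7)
The plan is to apply Equation~\ref{eq:fu} carefully, but to notice that with duplicates both the maximum signaling number $M$ and the number of excluded values depend on the repetition pattern of the visible multiset. Since the audience picks both the $K$ cards and the one to hide, the strategy must survive the worst case.

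First, I would set $D = N/2$ and parametrize the visible $K-1$ cards by $p$, the number of values appearing twice among them; then $p$ ranges in $\{0,1,\ldots,\floor*{(K-1)/2}\}$. I would check that every $p$ in this range is realized by some admissible hand (e.g., $p$ duplicate pairs plus singletons, with the audience hiding an appropriate card). For a visible multiset $T$ with $p$ pairs, the number of distinguishable arrangements is $(K-1)!/2^p$, while the hidden value can be any of the $D$ distinct values except the $p$ already used twice, giving $D-p$ possibilities. To guarantee uniqueness, we need
\[
\frac{(K-1)!}{2^p} \;\geq\; D - p \quad\Longleftrightarrow\quad D \leq f(p) := \frac{(K-1)!}{2^p} + p
\]
for every admissible $p$.

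Second, I would show the binding constraint occurs at $p = \floor*{(K-1)/2}$. Computing
\[
f(p) - f(p+1) = \frac{(K-1)!}{2^{p+1}} - 1,
\]
this is nonnegative precisely when $(K-1)! \geq 2^{p+1}$, and I would verify by a one-line induction that $(K-1)! \geq 2^{\floor*{(K-1)/2}}$ for all $K \geq 2$. Hence $f$ is non-increasing on the admissible range, so the upper bound is $D \leq f(\floor*{(K-1)/2}) = (K-1)!/2^{\floor*{(K-1)/2}} + \floor*{(K-1)/2}$, which after multiplying by $2$ matches the claim.

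Third, for achievability I would describe a strategy analogous to Section~\ref{sec:fu}. Fix a total order on values, and for each visible multiset $T$ list the admissible hidden values in increasing order and the distinguishable arrangements of $T$ in a canonical (say, lexicographic) order; the assistant maps the $i$-th hidden value to the $i$-th arrangement. The inequality $f(p) \geq D$ for all admissible $p$ guarantees the required injection exists, and the magician decodes by reading $T$ off the displayed cards and looking up the index. The main obstacle in writing this out is the monotonicity verification in the second step, ensuring that smaller values of $p$ do not impose a stricter constraint than $p = \floor*{(K-1)/2}$; once that is settled, the bound and matching construction follow by pure counting.
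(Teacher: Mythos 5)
Your proposal is correct and follows essentially the same route as the paper: both apply the face-up formula $N = 2(M+E)$ per visible multiset and show the binding case is the maximal number of duplicate pairs $p = \floor*{\frac{K-1}{2}}$, since adding a pair halves the signaling number while excluding only one more value. Your treatment is somewhat more explicit than the paper's, in that you formalize the trade-off as monotonicity of $f(p) = (K-1)!/2^p + p$ and note the needed inequality $(K-1)! \geq 2^{\floor*{(K-1)/2}}$, which the paper leaves implicit.
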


\begin{proof}
Our main formula in Eq~\ref{eq:fu} gives $N=2D = 2(M+E)$. If the audience chooses the hidden card, all the assistant can do with the remaining cards is permute them. The worst case scenario for the maximum signaling number is when there is the maximum amount of duplicates in the $K-1$ cards the assistant gets, which can reach $\floor{\frac{K-1}{2}}$, which gives us the maximum signaling number.
\[
M = \frac{(K-1)!}{2^{\floor*{\frac{K-1}{2}}}}.
\]

The worst-case scenario for excluded cards is when all cards are distinct, in which case we can't exclude anything. These two worst-case scenarios don't happen at the same time. When there is an extra duplicate in the set of cards, the maximum signaling number is twice less than otherwise, while the number of excluded cards increases by only 1. This means that the worst-case scenario for the sum of the maximum signaling number and the excluded cards occurs at the worst-case scenario of the maximum signaling number, which is when there are $\floor*{\frac{K-1}{2}}$ duplicate cards in the set. This would give us
\[
E=\floor*{\frac{K-1}{2}}
\]
and imply the bound. As usual, the bound is achievable.
\end{proof}

The sequence of the maximum deck size as a function of $K$ and starting from index 1 is below. It is 2 times the new sequence A372256:
\[2,\ 2,\ 4,\ 8,\ 16,\ 64,\ 186,\ 1266,\ 5048,\ 45368,\ 226810,\ 2494810,\ 14968812,\ \ldots.\]


\subsection{The assistant chooses the hidden card}

We estimate the bound when the assistant chooses the hidden card.

\begin{theorem}
When the deck consists of duplicate cards and the assistant chooses the hidden card, the number of distinct cards $D$ is bounded as:
\[\sum_{i=0}^{\lfloor K/2\rfloor} {D\choose i}{D-i\choose K-2i} \le \sum_{i=0}^{\lfloor (K-1)/2\rfloor} {D\choose i}{D-i\choose K-2i-1}\frac{(K-1)!}{2^i}.\]
\end{theorem}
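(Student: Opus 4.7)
The plan is to extend Kleber's counting argument, used to derive Equation~(\ref{eq:bound}), to the duplicate-deck setting. The key observation is that the assistant's strategy is a function from possible hands (multisets of $K$ cards in which each value appears at most twice) to messages (arrangements of $K-1$ cards on the table, where the two copies of a duplicated value are indistinguishable to the magician). For the magician to recover the hidden card, this function must be injective on hands: if two distinct hands $S_1\neq S_2$ produced the same visible arrangement $m$, then the hidden cards $S_1\setminus m$ and $S_2\setminus m$ would differ even though the magician sees the same thing. Hence the number of hands is at most the number of messages, and the proof reduces to verifying these two counts.

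For the hand count, I classify each hand by the number $i$ of values that appear twice. Such a hand is built by choosing the $i$ doubled values in $\binom{D}{i}$ ways and the remaining $K-2i$ singleton values in $\binom{D-i}{K-2i}$ ways, where $i$ ranges from $0$ to $\lfloor K/2\rfloor$. Summing gives the LHS. For the message count, a message is determined by a visible multiset of $K-1$ cards together with one of its distinct linear arrangements. If the visible multiset contains $i$ doubled values, there are $\binom{D}{i}\binom{D-i}{K-1-2i}$ such multisets and $(K-1)!/2^{i}$ distinct orderings of each (the standard multinomial count with $i$ duplicated entries). Summing over $i=0,\dots,\lfloor (K-1)/2\rfloor$ yields the RHS.

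The main subtlety I would want to pin down carefully is the indistinguishability bookkeeping: I should confirm that $(K-1)!/2^{i}$ is genuinely the number of distinguishable orderings the magician perceives (rather than a smaller or larger count), and that the injectivity-on-hands argument is unaffected when the hidden card happens to coincide in value with a visible one — in that case the hidden card is still a specific physical card in the deck, so the hand is a well-defined multiset and the pigeonhole step stands. Once these points are settled, the desired inequality is simply the restatement of hands $\le$ messages.
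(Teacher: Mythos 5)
Your proposal is correct and takes essentially the same approach as the paper: both rely on the injectivity (pigeonhole) argument inherited from Kleber's bound that the number of hands cannot exceed the number of messages, and both count hands and messages by conditioning on the number $i$ of duplicated values, obtaining $\binom{D}{i}\binom{D-i}{K-2i}$ hands and $\binom{D}{i}\binom{D-i}{K-2i-1}\frac{(K-1)!}{2^{i}}$ messages. The indistinguishability bookkeeping you flag as a subtlety is treated in the paper exactly as you propose, with $(K-1)!/2^{i}$ taken as the count of distinguishable orderings of a visible multiset containing $i$ duplicated pairs.
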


\begin{proof}
First, we calculate the possible number of hands the assistant gets. If there are $i$ duplicates, we can choose which cards are duplicates in ${D\choose i}$ ways, then we can choose the leftover cards in ${D-i\choose K-2i}$ ways. Thus, the total number of hands is:
\[\sum_{i=0}^{\lfloor K/2\rfloor} {D\choose i}{D-i\choose K-2i}.\]
To calculate the total number of messages, suppose there are $i$ duplicates among the displayed cards. Then there are ${D\choose i}$ ways to choose these duplicates, there are also ${D-i\choose K-2i-1}$ ways to choose the leftover cards, and there are $\frac{(K-1)!}{2^i}$ ways to permute the cards. The total is
\[\sum_{i=0}^{\lfloor (K-1)/2\rfloor} {D\choose i}{D-i\choose K-2i-1}\frac{(K-1)!}{2^i}.\]
Thus, we get the inequality in the statement.
\end{proof}

We leave it to the reader to check that for $K=1$ and 2, the maximum value for $D$ is 1. Similarly, for $K=3$, the maximum value for $D$ is 4.

For $K = 4$, we get
\[{D \choose 4} + D {D -1\choose 2} + {D \choose 2} \le 3!{D \choose 3} + 3 D (D -1).\]
We leave it for the reader to show that it is equivalent to
\[D^2-17D-30 \le 0,\]
implying that $D\le 18$.

When $K=5$, we get an equation
\[(D-2)\left(\frac{(D-3)(D-4)}{120} + \frac{(D-3)}{6}+ \frac{1}{2}\right) \le (D-2)(D-3) + 6 (D-2) + 3,\]
which is equivalent to
\[D^3 -109D^2 -134D + 336 \le 0.\]
After solving it, we get that $D \le 110$.

Thus, starting from $K=1$, the bound gives the maximum deck size of
\[2,\ 2,\ 8,\ 36,\ 220.\]

Here, we look at strategy examples for $K=2$ and 3.

\begin{example}
Suppose $K = 2$ and $N = 4$ (two distinct cards). Then, there are only 2 possible messages the magician receives, but there are 3 possible sets of 2 cards. Therefore, there can't be 2 distinct cards in the deck. Thus, the maximum deck size is $N = 2$. 
\end{example}

\begin{example}
For $K = 3$ and $N = 8$, there are 16 possible messages the magician can receive, and there are $\binom{4}{3}+4\cdot 3=16$ possible sets of 3 cards. A strategy is shown in Table~\ref{tab:dfu3}, where we assume that cards are numbered modulo 4.

\begin{table}[ht!]
\begin{center}
\begin{tabular}{|c|c|c|}
\hline
Cards given to the assistant & Cards seen by the magician & Guess\\
\hline
$a$, $a+1$, $a+2$ & $a$, $a+1$ & $a+2$\\
$a$, $a$, $a+1$ & $a$, $a$ & $a+1$\\
$a$, $a$, $a+2$ & $a$, $a+2$ & $a$\\
$a$, $a$, $a+3$ & $a$, $a+3$ & $a$\\
\hline
\end{tabular}
\end{center}
\caption{The strategy for duplicates in case $K=3$.}
\label{tab:dfu3}
\end{table}

We can describe the assistant's strategy. If there are no duplicate cards, order them consecutively mod $4$, hide the third, and show the first two in order. This corresponds to the first line in the table. If there is a pair of duplicates, let us denote the duplicate card by $a$ and the other card by $b$. If $b \equiv a+1 \mod{4}$, hide the non-duplicate card and show the duplicates. This corresponds to the second line in the table. Otherwise, hide one of the duplicates and put the other in the first position. This corresponds to the third and fourth lines in Table~\ref{tab:dfu3}.

The magician uses Table~\ref{tab:dfu3} to guess the card.
\end{example}

We see that our examples reach our information-theoretic bound. We do not expect it to continue. However, there is a very simple general strategy which we will describe right now.

\subsection{Trick strategy: Signaling the duplicate card.}

If there are any duplicate cards among the $K$ cards, the assistant puts one of those duplicates as the leftmost card and hides the other. The assistant then arranges all the other cards in non-decreasing order. For example, if the cards the assistant gets are 6, 9, 9, 10, 10, and 11, he/she can show 9, 6, 10, 10, and 11 and hide the other 9.

Note that if there are at least two pairs of duplicates, we can choose which duplicate card to show. In the example above, we can signal 10 by presenting the cards in order 10, 6, 9, 9, 10, and 11. Moreover, if there are several duplicates, we can relax the rules. When the magician sees duplicates among displayed cards, the magician knows there are duplicates; thus, the hidden card is the duplicate of the leftmost card. So, the assistant can arrange other cards in any order.

Now, we assume all the cards given to the assistant are unique. We can use the best-trick strategy, where we are not allowed a permutation with all the cards except the first one in increasing order. Thus, there are $K-1$ forbidden permutations, implying that our maximum signaling number is 
\[M = (K-1)!-K+1.\]
As in the best trick, we multiply this number by $K$ and add excluded cards to get the number of distinct cards to be $D = KM + E = K((K-1)!-K+1)+K-1$. Thus, this method gives the maximum deck size
\[N=2(K((K-1)!-K+1)+K-1),\]
which simplifies to
\[N= 2(K!-K^2+2K-1).\]
The corresponding sequence starting from $K=1$ is
\[2,\ 2,\ 4,\ 30,\ 208,\ 1390,\ 10008,\ 80542,\ 725632,\ 7257438,\ \ldots,\]
which is now twice sequence A372264.


For $K=4$ and $K=5$, the difference between the deck size achieved in this strategy and the maximum indicated by the information-theoretical bound is quite small.

\begin{example}
For $K=3$ and $N=4$, there are two distinct cards, 0 and 1, and the assistant can get either 001 or 011. Since there has to be a duplicate, the magician's guess is solely based on the leftmost card.
\end{example}

\section{Guessing more than 1 card}
\label{sec:morethanone}

What if the magician needs to guess not one but more cards? Here, we consider the case when the magician needs to guess $C > 1$ cards. We allow the cards to be rotated but assume that they are face up.

\subsection{The audience chooses the hidden cards}

As before, when the audience chooses the hidden cards, we can calculate the maximum deck size exactly.

\begin{theorem}
When the audience chooses $C$ hidden cards, the maximum deck size satisfies the equation
\[{N-K+C \choose C} \le R^{K-C}(K-C)!.\]
\end{theorem}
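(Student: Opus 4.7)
The plan is a direct pigeonhole argument, parallel to the case $C=1$ handled in Section~\ref{sec:fu}. I would analyze the problem from the magician's vantage point: after the audience chooses the $C$ hidden cards, the assistant displays the remaining $K-C$ cards in a line, each rotated into one of $R$ orientations, and the magician must recover the hidden $C$-set from this arrangement.

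First, I would count the scenarios the magician must distinguish. Fix any set $S$ of $K-C$ cards that could appear on the table. The hidden cards form a $C$-subset of the $N-(K-C)=N-K+C$ cards absent from the table, so there are $\binom{N-K+C}{C}$ possible hidden sets consistent with $S$ being on display. Next, I would count the messages the assistant can actually send once $S$ is on the table: $(K-C)!$ orderings times $R^{K-C}$ independent rotation choices, for a total of $R^{K-C}(K-C)!$ distinguishable arrangements of $S$.

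Then I would invoke injectivity. Because the audience controls which $K$ cards to hand over and which $C$ of them to hide, every pair $(S,H)$ with $H$ a $C$-subset disjoint from $S$ must be handled by some valid encoding. For a fixed displayed set $S$, different hidden sets $H$ must be encoded by different arrangements of $S$; otherwise the magician cannot reliably decode. Comparing the two counts gives
\[\binom{N-K+C}{C}\le R^{K-C}(K-C)!,\]
which is precisely the inequality in the statement.

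The only delicate point, and hardly an obstacle, is recognizing that the bound is already forced at the level of a single fixed displayed set $S$, so there is no need for a global counting argument that sums over all possible $S$. Once that observation is made, the proof reduces to matching the right factorial and binomial factors, and I anticipate no further difficulty.
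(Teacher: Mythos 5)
Your proof is correct and takes essentially the same approach as the paper: for a fixed displayed set, the number of possible hidden $C$-sets, $\binom{N-K+C}{C}$, cannot exceed the number of distinguishable arrangements, $R^{K-C}(K-C)!$, which is exactly the paper's comparison of the signaling number with the number of possibilities for the hidden cards. The only difference is that the paper's proof also remarks that the bound is achievable (a fact used later in the corollary for two hidden cards), but that is not needed for the inequality as stated.
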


\begin{proof}
The assistant can permute and rotate the available cards. Thus, the maximum signaling number $M$ is $R^{K-C}(K-C)!$. The magician needs to guess the $C$ cards that are not shown. The number of possibilities is $N-K+C \choose C$. Thus, the maximum deck size is bounded:
\[{N-K+C \choose C} \le R^{K-C}(K-C)!.\]
Similar to before, the bound is achievable. The assistant and magician agree on how to index permutations and possible sets of $C$ cards. Then, the assistant makes a permutation to match the hidden set.
\end{proof}

We separately discuss the case of two hidden cards.

\begin{corollary}
When the audience chooses the two hidden cards, the maximum deck size is
\[N = \left\lfloor \frac{2K-3 + \sqrt{1+8R^{K-2}(K-2)!}}{2} \right\rfloor.\]
\end{corollary}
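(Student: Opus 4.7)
The plan is to specialize the theorem just above to $C=2$ and then solve the resulting quadratic inequality in $N$. Setting $C=2$ in
\[\binom{N-K+C}{C} \le R^{K-C}(K-C)!\]
gives $\binom{N-K+2}{2} \le R^{K-2}(K-2)!$, which I would expand as
\[(N-K+2)(N-K+1) \le 2R^{K-2}(K-2)!.\]

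Next, I would substitute $m = N-K+1$ to turn this into $m(m+1) \le 2R^{K-2}(K-2)!$, i.e.\ $m^2 + m - 2R^{K-2}(K-2)! \le 0$. The positive root of the corresponding quadratic is $\frac{-1+\sqrt{1+8R^{K-2}(K-2)!}}{2}$, so the largest integer $m$ satisfying the inequality is
\[m = \left\lfloor \frac{-1+\sqrt{1+8R^{K-2}(K-2)!}}{2}\right\rfloor.\]

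Finally, I would back-substitute $N = m + K - 1$ and push the additive constant $K-1 = \frac{2K-2}{2}$ inside the floor to obtain
\[N = \left\lfloor \frac{2K-3 + \sqrt{1+8R^{K-2}(K-2)!}}{2}\right\rfloor,\]
which is the claimed formula. Achievability is inherited directly from the theorem (whose bound is stated to be achievable by the indexing strategy), so no separate construction is needed.

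There is essentially no obstacle here: the argument is a routine specialization followed by the quadratic formula. The only minor care point is confirming that moving the integer $K-1$ inside the floor is legal (which is immediate, since $\lfloor x \rfloor + n = \lfloor x + n \rfloor$ for any integer $n$) and checking that the discriminant is non-negative, which holds because $R^{K-2}(K-2)! \ge 0$.
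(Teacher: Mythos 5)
Your proposal is correct and follows essentially the same route as the paper: specialize the theorem to $C=2$, reduce to the quadratic inequality $(N-K+2)(N-K+1)\le 2R^{K-2}(K-2)!$, and take the floor of the largest root. The only difference is cosmetic --- you substitute $m=N-K+1$ to keep the quadratic tidy, while the paper expands fully in $N$ and applies the quadratic formula directly; both yield the same discriminant $1+8R^{K-2}(K-2)!$ and the same final formula.
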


\begin{proof}
Our equation, in this case, becomes
\[ N^2 -2NK + 3N -3K+ K^2+2 \leq 2R^{K-2}(K-2)!.\]
Using the quadratic formula, we can find the largest root and get our result.
\end{proof}

Table~\ref{tab:2aud} shows the bound for small values of $R$ and $K$ when two cards are hidden.
\begin{table}[ht!]
\begin{center}
\begin{tabular}{|c|c|c|c|c|c|}
\hline
$R/K$ & 2 & 3 & 4 & 5 & 6 \\
\hline
1 & 2 & 3 & 4 & 7 & 11 \\
2 & 2 & 3 & 6 & 13 & 32 \\
3 & 2 & 4 & 8 & 21 & 66 \\
4 & 2 & 4 & 10 & 31 & 115 \\
5 & 2 & 4 & 12 & 42 & 177 \\
\hline
\end{tabular}
\end{center}
\caption{Maximum deck sizes for small values of $K$ and $R$ when the cards are in a line and the audience chooses the two hidden cards.}
\label{tab:2aud}
\end{table}

The first row, when $R=1$, corresponds to the new sequence A372266 (starting from $K =2$)
\[2,\ 3,\ 4,\ 7,\ 11,\ 21,\ 44,\ 107,\ 292,\ 861,\ 2704,\ 8946,\ 30964,\ \ldots.\]

\begin{example}
For $K=5$, the deck size $N=7$. The assistant has three cards that can be permuted in six ways. Thus, the assistant can signal any number between 1 and 6, inclusive. On the other hand, there are four unseen cards. They form six pairs of cards. We can arrange the six pairs of cards in lexicographic order, making each pair correspond to a number 1 through 6. For example, suppose the assistant gets cards 2, 3, 4, 5, and 6, and the audience hides 3 and 5. Then possible pairs of unseen cards are $(1,3)$, $(1,5)$, $(1,7)$, $(3,5)$, $(3,7)$, and $(5,7)$ in lexicographic order. The pair the magician needs to guess is number four. Thus, the assistant uses the lexicographic order of permutations and picks permutation $(4,6,2)$ to signal number 4.
\end{example}

\subsection{The assistant chooses the hidden cards}

Now, we consider the case of the assistant choosing the hidden cards.

\begin{theorem}
When the assistant chooses $C$ hidden cards, the maximum deck size is bounded as
\[\prod\limits_{i=1}^{C}(N-K+i)\leq R^{K-C} K!.\]
\end{theorem}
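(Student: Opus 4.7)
The plan is to generalize Kleber's information-theoretic counting argument used earlier for the single-hidden-card case. For the trick to be performable on a deck of size $N$, every $K$-card hand the audience can choose must be encodable by the assistant as some displayed message, and distinct hands must be encoded as distinct messages (otherwise the magician cannot reliably invert the encoding). Therefore the number of $K$-card hands cannot exceed the number of possible messages the magician could ever observe.

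First I would count both sides. The number of possible hands the audience can choose is $\binom{N}{K}$. A message, on the other hand, is determined by (i) a choice of which $K-C$ cards are displayed, giving $\binom{N}{K-C}$ options, (ii) a permutation of those displayed cards, giving $(K-C)!$ options, and (iii) a rotation of each displayed card, contributing a factor of $R^{K-C}$. So the total number of messages is at most $\binom{N}{K-C}(K-C)!\, R^{K-C}$. The required inequality is therefore
\[
\binom{N}{K} \;\le\; \binom{N}{K-C}(K-C)!\, R^{K-C}.
\]

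The last step is purely algebraic: expand the binomial coefficients and cancel common factorial factors. The left side is $\frac{N!}{K!(N-K)!}$ and the right side is $\frac{N!}{(N-K+C)!}\, R^{K-C}$. Dividing and rearranging yields $\frac{(N-K+C)!}{(N-K)!} \le K!\, R^{K-C}$, and the ratio on the left is exactly $\prod_{i=1}^{C}(N-K+i)$, giving the stated bound.

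There is essentially no obstacle here beyond being careful with the counting; the main thing to verify is that the upper bound on messages really is valid, i.e.\ that we have not double-counted or overlooked a symmetry. Since the displayed cards are distinguishable by their position on the table (the row has a left-to-right orientation) and by their rotation state, no two choices of (displayed set, permutation, rotation) produce the same visible configuration, so the count $\binom{N}{K-C}(K-C)!\, R^{K-C}$ is correct. Note that the bound is typically \emph{not} tight for $C>1$, because not every message need correspond to an actual assistant-producible hand; the argument only yields an inequality, matching the statement of the theorem.
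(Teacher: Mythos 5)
Your proposal is correct and follows essentially the same route as the paper: both compare the number of $K$-card hands $\binom{N}{K}$ against the number of messages, which you write as $\binom{N}{K-C}(K-C)!\,R^{K-C}$ and the paper writes in the identical form $R^{K-C}N!/(N-K+C)!$, and then both simplify to $\prod_{i=1}^{C}(N-K+i)\le R^{K-C}K!$. No substantive difference in the argument.
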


\begin{proof}
We can calculate the information-theoretic bound the same way as before. The number of possible hands the assistant gets can't be more than the number of messages:
\[{N\choose K} \leq \frac{R^{K-C}N!}{(N-K+C)!}.\]
We can simplify this to 
\[(N-K+C)!\leq R^{K-C} K! (N-K)!\]
and
\[\prod\limits_{i=1}^{C}(N-K+i)\leq R^{K-C} K!.\]
\end{proof}

We separately discuss the case of two hidden cards.
\begin{corollary}
When the assistant chooses two hidden cards, the maximum deck size is bounded as
\[N \leq \frac{2K-3 + \sqrt{1+4R^{K-2}K!}}{2}.\]
\end{corollary}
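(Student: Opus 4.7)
The plan is to specialize the preceding theorem to $C=2$ and then solve the resulting quadratic inequality for $N$. Starting from
\[(N-K+1)(N-K+2) \leq R^{K-2} K!,\]
I would substitute $y = N-K$ to turn the left-hand side into $y^{2} + 3y + 2$, which makes the inequality a standard quadratic in $y$:
\[y^{2} + 3y + \bigl(2 - R^{K-2} K!\bigr) \leq 0.\]

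Next I would apply the quadratic formula to the companion equation $y^{2} + 3y + (2 - R^{K-2} K!) = 0$. Its discriminant is $9 - 4(2 - R^{K-2}K!) = 1 + 4 R^{K-2} K!$, which is nonnegative, so the roots are real and the inequality $y^{2} + 3y + (2 - R^{K-2} K!) \leq 0$ holds precisely when $y$ lies between the two roots. The relevant (larger) root gives the ceiling on $y$:
\[y \leq \frac{-3 + \sqrt{1 + 4R^{K-2}K!}}{2}.\]
Converting back via $N = y + K$ immediately yields the stated bound
\[N \leq \frac{2K - 3 + \sqrt{1 + 4R^{K-2}K!}}{2}.\]

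There is no real obstacle here: the argument is just algebraic manipulation of a quadratic inequality derived from the previous theorem. The only minor care needed is to pick the correct root (the larger one, since we want the largest admissible $N$) and to notice that the discriminant is always positive for $K \geq 2$, so no separate case analysis is required. I would keep the proof to a few lines, matching the style of the earlier corollary for the audience-chooses case.
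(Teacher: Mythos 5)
Your proof is correct and takes essentially the same approach as the paper: specialize the preceding theorem to $C=2$ and solve the resulting quadratic inequality for $N$ with the quadratic formula, keeping the larger root. Your substitution $y = N-K$ merely tidies the algebra that the paper carries out directly in $N$ (where the discriminant simplifies from $9+4K^2-12K-4K^2+12K-8+4R^{K-2}K!$ to $1+4R^{K-2}K!$), so the two arguments are the same in substance.
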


\begin{proof}
When $C = 2$, our inequality becomes
\begin{equation}
\label{eq:2cardbound}
(N-K+2)(N-K+1)\leq R^{K-2} K!.
\end{equation}
After collecting the terms with the same powers of $N$, we get
$$N^2+N(3-2K) +(K^2-3K+2 - R^{K-2} K!)\leq 0.$$

Using the quadratic formula, we can find the largest root
\[\frac{2K-3 + \sqrt{9+4K^2-12K-4K^2+12K-8+4R^{K-2}K!}}{2},\]
which simplifies to
\[\frac{2K-3 + \sqrt{1+4R^{K-2}K!}}{2},\]
implying the statement.
\end{proof}

Table~\ref{tab:2ass} shows the bound values for small values of $R$ and $K$ when $C=2$.

\begin{table}[ht!]
\begin{center}
\begin{tabular}{|c|c|c|c|c|c|c|}
\hline
$R/K$ & 2 & 3 & 4 & 5 & 6 & 7\\
\hline
1 & 2 & 4 & 7 & 14 & 31 & 76\\
2 & 2 & 5 & 12 & 34 & 111 & 407\\
3 & 2 & 5 & 17 & 60 & 245 & 1112\\
4 & 2 & 6 & 22 & 91 & 433 & 2277\\
5 & 2 & 7 & 27 & 125 & 675 & 3974\\
\hline
\end{tabular}
\end{center}
\caption{The bounds for the deck sizes for small values of $K$ and $R$ when the cards are in a line and the assistant chooses the two hidden cards.}
\label{tab:2ass}
\end{table}

The first row corresponds to new sequence A372265
\[0,\ 2,\ 4,\ 7,\ 14,\ 31,\ 76,\ 207,\ 609,\ 1913,\ 6327,\ 21896,\ 78922,\ 295272,\ 1143549,\ \ldots.\]

\begin{example}
For $K=3$ and $N=4$, the strategy is as follows. The assistant gets three cards out of four. Suppose card $a$ is missing. Then, the assistant displays $a+1$ modulo 4, and the magician guesses $a+2$ and $a+3$ modulo 4. In other words, we can assume that the deck is arranged in a circle. The assistant gets three consecutive cards and shows the first card. The magician guesses the next two cards.
\end{example}

\begin{example}
Suppose $K=4$ and $N=7$. Considering the 4 cards modulo 7, we have 5 cases up to clockwise rotation: a) cards are consecutive, b) three consecutive cards, then skip one, c) three consecutive cards, then skip two, d) two pairs of consecutive cards, e) one pair of consecutive cards and two isolated cards. These cases and the assistant's strategy are presented in Table~\ref{tab:Guessing2:4-7}.

\begin{table}[ht!]
\centering
\begin{tabular}{|c|c|}
\hline
 Message	& Hidden cards 	\\ \hline 
$a,\ a+1$	& $a+2,\ a+3$		\\  
$a,\ a-1$	& $a+1,\ a+3$		\\  
$a,\ a+2$	& $a+1,\ a-2$		\\ 
$a,\ a+3$	& $a-3,\ a-1$		\\  
$a,\ a-2$	& $a+1,\ a+3$		\\ \hline 
\end{tabular}
\caption{A strategy for guessing two cards when $K=4$ and $N=7$.}
\label{tab:Guessing2:4-7}
\end{table}
\end{example}

\subsection{The best-card-trick method reused for two hidden cards}

The assistant gets $K$ cards, and at first, the assistant pretends they need to hide only one card $a$. As in the best trick, card $a$ is selected as card $c_i$, where $i$ is the sum of the $K$ cards mod $K$. Then, cards are re-indexed, and card $b$ is selected as card $c_j$, where $j$ is the sum of the remaining cards mod $K-1$. The magician guesses the cards in order $b$, then $a$.

When the magician guesses card $b$, there are $N-E = N-K+2$ possibilities for it. The fact that the best-trick method was used to choose $b$ means that the magician can find the value of $b$ mod $K-1$. Thus, there are up to $\lceil \frac{N-K+2}{K-1}\rceil$ possibilities for the value of $b$, which the magician needs to distinguish. Thus, the maximum signaling number for $b$ should be $\lceil \frac{N-K+2}{K-1}\rceil$. Similarly, the maximum signaling number for $a$ should be $\lceil \frac{N-K+1}{K}\rceil$.

The total number the assistant can signal is $R^{K-2}(K-2)!$, which can't be less than the product of the numbers above. Thus, we have
\[
\left\lceil \frac{N-K+1}{K}\right\rceil \left\lceil \frac{N-K+2}{K-1}\right\rceil
\le R^{K-2}(K-2)!.
\]
This is almost exactly the formula for the above-mentioned bound in Eq.~\ref{eq:2cardbound}. Table~\ref{tab:2assbctm} shows the maximum deck size corresponding to the strategy we describe for small values of $R$ and $K$ when $C=2$. We see that the difference between the bound in Table~\ref{tab:2ass} and the achievable deck size in Table~\ref{tab:2assbctm} is minor.

\begin{table}[ht!]
\begin{center}
\begin{tabular}{|c|c|c|c|c|c|c|} 
\hline $R/K$ & 2 & 3 & 4 & 5 & 6 & 7 \\ 
\hline
1   & 1 & 3 & 7 & 14 & 29  & 76  \\
2   & 1 & 5 & 11 & 34 & 109 & 405 \\
3   & 1 & 5 & 15 & 59 & 244 & 1109 \\
4   & 1 & 5 & 20 & 89 & 431 & 2274 \\
5   & 1 & 5 & 26 & 124 & 671 & 3971 \\
\hline 
\end{tabular}
\end{center}
\caption{Maximum deck sizes for small values of $K$ and $R$ when two cards are hidden and the best-card-trick method is used.}
\label{tab:2assbctm}
\end{table}

This strategy can be extended to the case when more than two cards are hidden.

\begin{example}
When $R=1$ and $K=5$, our bound for the strategy gives: $\left\lceil \frac{N-4}{5}\right\rceil \left\lceil \frac{N-3}{4} \right\rceil \le 6$, implying that $N\le 14$, which matches our theoretical bound. Now, we design a strategy. First, we see that we need to split the $(K-2)! = 6$ of permutations into $\lceil\frac{N-4}{5}\rceil=2$ for $a$ and $\lceil\frac{N-3}{4}\rceil=3$ for $b$. We can label permutations in lexicographic order and correspond them to two signaling numbers $(1,1)$, $(1,2)$, $(2,1)$, $(2,2)$, $(3,1)$ and $(3,2)$ in this order, where the first number signals $b$ and the second number signals $a$.

Say the assistant receives the cards 2, 3, 7, 9, and 13. The assistant calculates that $2+3+7+9+13 \equiv 4\mod 5,$ so the hidden card $a$ is the fifth card, which is 13. After renumbering, the card 13 becomes the card 9, which has a quotient of 1 when divided by 5, so the corresponding signaling number is 2. The assistant now has 2, 3, 7, 9, and calculates $2+3+7+9\equiv 1\mod 4$, so the hidden card $b$ is the second card, which is 3, with the signaling number 1. Thus, the assistant hides 3 and 13 while keeping 2, 7, and 9 and needs to signal $(1,2)$ pair, which corresponds to the second permutation in lexicographic order, which is $(2,9,7)$.

After the magician sees $(2,9,7)$, the realization is that the pair of signaling numbers is $(1,2)$. The hidden card $b$ could be $3$, $6$, or $11$. The actual value of $b$ is the smallest possibility, which is 3. Now, the magician knows cards 2, 3, 7, and 9. The possible hidden cards $a$ from this knowledge are $6$ or $13$. As the signaling number is 2, the actual value is 13.
\end{example}

\section{Acknowledgments}

We are grateful to the MIT PRIMES STEP program for allowing us to conduct this research.


\begin{thebibliography}{9}

\bibitem{K} Michael Kleber, The best card trick, \textit{The Mathematical Intelligencer} 24 (2002), 9--11.

\bibitem{Lee} Wallace Lee, \emph{Math Miracles}, published by Seeman Printery, Durham, N.C., 1950.

\bibitem{Mulcahy} Colm Mulcahy, \emph{Mathematical card magic: fifty-two new effects}, published by CRC press, 2013.

\bibitem{OEIS} OEIS Foundation Inc. (2023), The On-Line Encyclopedia of Integer Sequences, Published electronically at \url{https://oeis.org}.


\end{thebibliography}
\end{document}